\newcommand{\kom}[1]{}
\renewcommand{\kom}[1]{{\bf [#1]}}
\numberwithin{equation}{section}
\newcommand{\e}{\varepsilon}
\newcommand{\<}{\langle}
\renewcommand{\>}{\rangle}
\newcommand{\R}{\mathbb{R}}
\newtheorem{thm}{Theorem}
\newtheorem{de}[thm]{Definition}
\newtheorem{lem}[thm]{Lemma}
\newtheorem{cor}[thm]{Corollary}
\newcommand{\dd}{\partial}
\renewcommand{\d}{\hspace{0.05em}\textup{d}}
\renewcommand{\div}{\operatorname{div}}
\newcommand{\ont}{\int\limits_0^T\!\!\!\int\limits_\Omega}
\newcommand{\abs}[1]{\left|#1\right|}
\newcommand{\Om}{\Omega}
\title{Trudinger's Parabolic Equation}
\author{Peter Lindqvist, Mikko Parviainen, Saara Sarsa}
\date{\phantom{abc}}
\begin{document}

\maketitle

\begin{abstract}
\noindent \textsf{We study the uniqueness of non-negative solutions of the equation
$$
\dd_t\left(|u|^{p-2}u\right)\,=\, \div(|\nabla u|^{p-2}\nabla u).
$$
Basic estimates are derived with the Galerkin method.}
\end{abstract}

\bigskip 

{\small \textsf{AMS Classification 2020}: 35A02, 35K65, 35K55} 

{\small \textsf{Keywords}: Trudinger's Equation, Uniqueness, Existence, Galerkin's Method}


\section{Introduction}
 The  doubly non-linear evolutionary equation
\begin{equation}
\label{eq:maineq}
\frac{\dd\,(|u|^{p-2}u)}{\dd t}\,=\, \div (|\nabla u|^{p-2}\nabla u)   \quad \text{in }\quad \Omega\times (0,T),
\end{equation}
where $\Omega$ is a domain in $\R^n$, was introduced by N. Trudinger in \cite{T}. He observed that, due to the homogeneous structure, it enjoys the property that no ``intrinsic scaling'' is needed in its Harnack Inequality. However, especially when it comes to sign-changing solutions, little is known in contrast to the  current situation for the evolutionary $p$-Laplace equation
$$  \frac{\dd u}{\dd t}\,=\, \div (|\nabla u|^{p-2}\nabla u).$$
\begin{itemize}
\item The natural uniqueness seems to be unsettled for the ordinary Cauchy-Dirichlet boundary value problem in space$\times$time  cylinders.
\item The local boundedness of the gradient
  $$\nabla u\,=\, \Big(\frac{\dd u}{\dd x_1},\frac{\dd u}{\dd x_2},\dots,\frac{\dd u}{\dd x_n}\Big)$$ is problematic\footnote{The case $\inf\{u\} > 0$ is established in the recent \cite{LPS}.}, not to mention further regularity questions.
\end{itemize}

Our object is the uniqueness. Some surprising complications arise, because regularized equations like
$$\frac{\dd}{\dd t}\Bigl((u^2+\e^2)^\frac{p-2}{2}u\Bigr)\,=\,\nabla\! \cdot\!\bigl((|\nabla u|^2+\e^2)^\frac{p-2}{2}\nabla u \bigr)$$
do not seem to satisfy the assumptions required in the classical theory presented in the monograph~\cite{LUS}, written by O. Ladyzhenskaya, N. Uraltseva, and V. Solonnikov. A favorable exception is the one-dimensional case $n=1$, see \cite[Theorem 5.2, Chapter VI.5, p.564]{LUS}. Considerations, referring to ``classical theory'' often suffer from this lack.

 For the case $u\,\geq\,0$, we have a slightly stronger uniqueness theorem than  previously known results, which are for $\inf u>0$.
\begin{thm}\label{mainresult} \sloppy Let $p \geq 2.$ A non-negative weak solution  $u \in C(\Omega_T)\,\cap\,L^p(0,T;W^{1,p}(\Omega))$   with Cauchy-Dirichlet  boundary values  $\psi \in C^2(\overline{\Omega_T})$   is unique. Moreover, the Sobolev derivative $\dd_t(u^{p-1})$ exists and belongs to $L^2(\Omega_T).$
\end{thm}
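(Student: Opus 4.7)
The proof splits naturally into two parts: the regularity assertion for $\partial_t(u^{p-1})$ and the uniqueness proper. For the first, I would construct Galerkin approximations $u_N$ and work with them. These finite-dimensional approximants have classical time derivatives, and testing the approximate equation against $\partial_t(u_N^{p-1}) = (p-1)u_N^{p-2}\partial_t u_N$ puts an $\|\partial_t(u_N^{p-1})\|_{L^2}^2$ term on the left; on the right, the quantity $\int |\nabla u_N|^{p-2}\nabla u_N \cdot \nabla[\partial_t(u_N^{p-1})]\,dx$ integrates to the time derivative of a $p$-Dirichlet-type energy, thanks to the chain rule identity $\nabla(u^{p-1}) = (p-1)u^{p-2}\nabla u$. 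The $C^2$-regularity of $\psi$ controls the boundary contribution, and weak-limit lower semicontinuity preserves the $L^2$-bound as $N \to \infty$.

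With $\partial_t(u^{p-1}) \in L^2(\Omega_T)$ in hand, I would prove uniqueness by taking two non-negative solutions $u,v$ sharing the boundary data $\psi$, subtracting the weak formulations, and testing against $u^{p-1} - v^{p-1}$. This function vanishes on the parabolic boundary and is admissible by the regularity just established. The calculation then yields the energy identity
\begin{equation*}
\tfrac12 \int_\Omega (u^{p-1} - v^{p-1})^2(T)\,dx \,=\, -\int_0^T\!\!\int_\Omega \bigl(|\nabla u|^{p-2}\nabla u - |\nabla v|^{p-2}\nabla v\bigr)\cdot\nabla(u^{p-1} - v^{p-1})\,dx\,dt,
\end{equation*}
the initial contribution vanishing because $u(\cdot,0) = v(\cdot,0) = \psi(\cdot,0)$. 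Uniqueness reduces to showing the right-hand side is non-positive.

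The main obstacle is precisely this. Expanding with $\nabla(u^{p-1}) = (p-1)u^{p-2}\nabla u$, the integrand equals
$(p-1)\bigl[u^{p-2}|\nabla u|^p + v^{p-2}|\nabla v|^p - u^{p-2}|\nabla v|^{p-2}\nabla v \cdot \nabla u - v^{p-2}|\nabla u|^{p-2}\nabla u \cdot \nabla v\bigr]$,
which, in contrast to the standard $p$-Laplace monotonicity $(|\nabla u|^{p-2}\nabla u - |\nabla v|^{p-2}\nabla v) \cdot (\nabla u - \nabla v) \ge 0$, fails to be pointwise non-negative: the mismatch between the weights $u^{p-2}, v^{p-2}$ and the arguments inside $|\cdot|^{p-2}$ can flip the sign, for instance when $u \gg v$ while $|\nabla u| \ll |\nabla v|$. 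To salvage the argument I would deploy weighted Young inequalities to balance the two cross terms against the diagonal $u^{p-2}|\nabla u|^p + v^{p-2}|\nabla v|^p$, exploiting the continuity of $u,v$ on the closed cylinder (hence uniform boundedness) to reduce any residual to a multiple of $\|u^{p-1} - v^{p-1}\|_{L^2}^2$. A Gronwall argument then forces this energy, which vanishes at $t=0$, to stay zero, and we conclude $u \equiv v$.
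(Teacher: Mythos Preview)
Your argument is circular at its root. Galerkin approximation yields $\partial_t(u^{p-1})\in L^2(\Omega_T)$ only for the \emph{particular} solution obtained as the limit of the approximants, not for an arbitrary weak solution with data $\psi$. When you then take two solutions $u,v$ and test with $u^{p-1}-v^{p-1}$, you are tacitly assuming that $v$ already enjoys this time regularity; but that is precisely what is unknown until uniqueness is proved. The paper states this explicitly: ``we cannot exclude the possibility that less regular solutions exist.'' The regularity assertion in the theorem is therefore a \emph{consequence} of uniqueness (the arbitrary solution turns out to coincide with the Galerkin one), not a tool for proving it.

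Even if both solutions were $t$-regular, your energy argument does not close. You correctly note that the integrand can change sign, but the proposed repair---Young inequalities leaving a residual bounded by $C\|u^{p-1}-v^{p-1}\|_{L^2}^2$---is impossible. Take $p=3$, $u=1$, $v=2$, $\nabla u=\lambda(1+\e)$, $\nabla v=\lambda$ in one direction: the integrand equals $\lambda^3\bigl[(1+\e)^3+2-(1+\e)-2(1+\e)^2\bigr]\approx -2\lambda^3\e$, which is unbounded below as $\lambda\to\infty$, while $(u^{p-1}-v^{p-1})^2=9$ is fixed. Any residual must carry a factor $|\nabla u|^p+|\nabla v|^p$, which Gronwall cannot absorb.

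The paper avoids both obstacles by a squeezing argument. Via Galerkin it produces $t$-regular solutions $u_{-\gamma}\le u_0\le u_{+\gamma}$ with data $\psi-\gamma,\psi,\psi+\gamma$; the comparison among these uses the test function $H_\delta\bigl((u_2-u_1)^+\bigr)$, which gives a sign by plain monotonicity of $\xi\mapsto|\xi|^{p-2}\xi$ and sidesteps your bad integrand entirely. An \emph{arbitrary} non-negative solution $u$ is then trapped between $u_{-\gamma}$ and $u_{+\gamma}$ using the comparison theorem of \cite{LL}, which needs strict positivity of only one of the two comparands ($u_{+\gamma}\ge\gamma>0$; on the other side Harnack reduces to $u>0$ in the interior) and makes no use of $\partial_t(|u|^{p-2}u)$. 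Letting $\gamma\to0$ forces $u=u_0$.
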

The result implies that less regular solutions with these boundary values $\psi$ cannot exist. It is curious that sign-changing solutions are employed in our proof in Section  \ref{gammaproof}. Yet, the case $u\,\geq\,0$ is not exhausted. What if $\psi$ merely belongs to $ C^1(\overline{\Omega_T})$ or only to $C(\overline{\Omega_T})$? Our approach is based on Galerkin's method in \cite{AL}. Unfortunately, the  estimates that we need are omitted in \cite{AL}. No doubt, it is known to the experts, but in Section \ref{approximative} we have worked out fundamental  estimates the whole way from the Galerkin approximations. This standard procedure has its advantages: it yields existence and explicit  estimates also for sign-changing solutions. Another approach is by Rothe's method as done  for Trudinger's equation in \cite{MN}, there restricted to zero lateral boundary values.  In \cite{S}, systems are treated.

Some of our exposition is valid for $1<p<\infty$, but strictly speaking Section~\ref{approximative} is written only for the case $p>2$. The case $p=2$ is simpler.

\section {Definitions and known properties}

We use standard notation. Let $\Omega_T\,=\,\Omega \times (0,T)$, where $\Omega \subset \mathbb{R}^n$ is an open, bounded domain with Lipschitz-regular \emph{parabolic boundary}
$$\dd_{p}\Omega_T\,=\, (\overline{\Omega} \times \{0\})\,\cup\,(\dd \Omega \times [0,T]).$$

\begin{de}\label{solution} We say that $u \in L^p(0,T;W^{1,p}(\Omega))$ is a weak solution   in $\Omega_T$ if
  $$\ont\left(-|u|^{p-2}u\,\phi_t\,+\,\<|\nabla u|^{p-2}\nabla u,\nabla \phi\> \right)\d x\d t\,\,=\,\,0$$
  for all $\phi \in C_0^{\infty}(\Omega_T).$
\end{de}
By advanced regularity theory, the weak solutions are locally H\"{o}lder continuous, cf.\ \cite{BDL}. We shall always use the continuous representative: $u \in C(\Omega_T).$
If the test function $\zeta$ vanishes only on the lateral boundary,
the equation reads
\begin{equation}\label{zetaeq}
 \Bigl{ \vert}_{t_1}^{t_2}  \int\limits_{\Omega} \zeta |u|^{p-2}u \d x \,=\,
    \int\limits_{t_1}^{t_2}\!\!\!\int\limits_\Omega\left(-|u|^{p-2}u\,\zeta_t+\<|\nabla u|^{p-2}\nabla u,\nabla \zeta\> \right)\d x\d t.
    \end{equation}

The \emph{maximum principle} is valid\footnote{Consider a function $u$ and its boundary maximum $M$, which both are solutions. Then, test both regularized weak formulations by $\eta_{\epsilon} (t)\big((|u|^{p-2} u)_h-(|M|^{p-2} M)_h\big)_+$, where $\eta_\epsilon$ is a time cut-off, and the result follows.}:
$$\min_{\dd_p \Omega_T} \{u\}\,\leq\,u\,\leq \,\max_{\dd_p \Omega_T}\{u\}\quad \text{if}\quad u \in C(\overline{\Omega_T}).$$

Harnack's inequality was proved in \cite[Theorem 2.6]{KK} for strictly positive solutions $u>0$, see also \cite{GV} and \cite{IMM}. The version given in \cite[Theorem~B.1]{BDL} allows non-negative solutions $u \geq 0$ and implies the following:

\smallskip

If $u\geq0$ in $\Omega_T$ and if $u(x_0,t_0) = 0$ at some interior point $(x_0,t_0) \in \Omega_T$, then
$$u(x,t)\,=\,0\quad \text{in} \quad \Omega \times (0,t_0).$$

\noindent\textbf{Boundary values.} We shall consider the boundary value problem $u=\psi$ on the parabolic boundary $\dd _p \Omega_T$, where the prescribed boundary values are induced by a given function
$$\psi \in C(\overline{\Omega_T}) \cap L^p(0,T;W^{1,p}(\Omega)).$$
A weak solution $u$ in $L^p(0,T;W^{1,p}(\Omega))$ is supposed to take the boundary values (at least) in the following sense:
\begin{description}
\item[\qquad(i)]\, $u-\psi \in L^p(0,T;W^{1,p}_0(\Omega))$
\item[\qquad(ii)]\, $\underset{t \to 0}{ \lim}\, \int\limits_{\Omega}\abs{|u(x,t)|^{p-2}u(x,t) -|\psi(x,t)|^{p-2}\psi(x,t)}\d x \,=\,0$.
\end{description}

A weak solution with boundary values $\psi$ exists by \cite{AL}, see also \cite{S}, and with zero lateral boundary values \cite{MN}. If $u\in C(\overline{\Omega_T})$, then conditions (i) and (ii) can be replaced by simply saying that $u = \psi$ on $\dd _p \Omega_T$. If $\psi$ is H\"{o}lder continuous in $\overline{\Omega} \times [0,T)$, so is $u$,  and $u = \psi$ on  $\dd _p \Omega_T$, cf.\ \cite{BDL}.

  The central problem is the following uniqueness question.
  \medskip

\noindent  \textbf{Problem.} \emph{Suppose that the weak solutions $u_1, u_2 \in  L^p(0,T;W^{1,p}(\Omega))$ have the same boundary values in the sense that}
 \begin{align*}
\text{{(i)}}&\,\, u_2-u_1 \in L^p(0,T;W^{1,p}_0(\Omega)),\\
\text{{(ii)}}&\,\, \lim_{t \to 0}  \int\limits_{\Omega}\abs{|u_2(x,t)|^{p-2}u_2(x,t) -|u_1(x,t)|^{p-2}u_1(x,t)}\d x \,=\,0.
\end{align*}

\emph{ Is it then true that $u_2=u_1$ in $\Omega_T$?}
 \medskip

The assumption $u_1, u_2 \in C(\overline{\Omega_T})$ does not seem to simplify  this difficult problem. Some special cases have been solved:
\begin{itemize}
\item  If $\inf\{u_1\} > 0$ and $\inf\{u_2\} > 0$, then $u_1 =u_2$. See \cite{IMJ} or \cite{LL}.
  \item If $\inf\{u_1\} > 0$ and $u_2\geq 0$, then $u_1 =u_2$. See \cite{LL}.
\item  If $$\dfrac{\dd\,}{\dd t}\bigl(|u_2(x,t)|^{p-2}u_2(x,t) -|u_1(x,t)|^{p-2}u_1(x,t)\bigr) \in L^1(\Omega_T),$$ then $ u_1 =u_2.$
\item $u_1 = 0\quad \Rightarrow\quad u_2 = 0$.
  \item If $u_1, u_2 \in L^p(0,T;W^{1,p}_0(\Omega))$, then $u_1 =u_2$. See \cite{O}.
\end{itemize}

 The next-to-last case comes by taking Stekloff averages. It is a special case of the much deeper last case, which is valid for sign-changing solutions.

It is instructive to prove the third case, where time derivatives are available. To this end, let $H_{\delta}(s)$ denote the usual approximation
\begin{equation*}H_{\delta}(s) \,=\, \begin{cases} 0,\,s\leq 0 \\
    \dfrac{s}{\delta},\,0 <s<\delta\\
    1,\, s\geq \delta
\end{cases} \end{equation*}
of the Heaviside function, see for instance \cite{IMJ}.  We can use the test  function $\zeta = H_{\delta}\bigl(u_2-u_1\bigr)$ in  both equations:
$$\int\limits_{t_1}^{t_2}\!\!\int\limits_{\Omega}\Bigl(\zeta\, \dd_t(|u_j|^{p-2}u_j)\,+\, \big\langle |\nabla u_j|^{p-2}\nabla u_j,\nabla \zeta \big\rangle \bigr) \d x \d t\,=\,0, \quad j = 1,2.$$

 Subtracting them, we see that
\begin{gather*}
  \int\limits_{t_1}^{t_2}\!\!\int\limits_{\Omega}\dd_t\left(|u_2(x,t)|^{p-2}u_2(x,t) -|u_1(x,t)|^{p-2}u_1(x,t)\right)H_{\delta}\bigl(u_2-u_1\bigr) \d x \d t\\
  = - \frac{1}{\delta}  \int\!\!\int \big\langle |\nabla u_2|^{p-2}\nabla u_2 -   |\nabla u_1|^{p-2}\nabla u_1,\nabla u_2 - \nabla u_1 \big\rangle \d x \d t\,\,\leq\,\, 0,
\end{gather*}
where the integration on the right is taken over the positivity set of $u_2-u_1$, and we used the vector inequality $\big\langle |b|^{p-2}b-|a|^{p-2}a,b -a \big\rangle \,\geq\,0$. Letting $\delta \to 0$, we obtain
$$ \int\limits_{t_1}^{t_2}\!\!\int\limits_{\Omega}\dd_t\left(|u_2(x,t)|^{p-2}u_2(x,t) -|u_1(x,t)|^{p-2}u_1(x,t)\right)^+ \d x \d t\,\, \leq \,\, 0.$$

Upon integration,
\begin{align*}
  &\int\limits_{\Omega}\left(|u_2(x,t_2)|^{p-2}u_2(x,t_2) -|u_1(x,t_2)|^{p-2}u_1(x,t_2)\right)^+ \d x \\  \leq \,&\int\limits_{\Omega}\left(|u_2(x,t_1)|^{p-2}u_2(x,t_1) -|u_1(x,t_1)|^{p-2}u_1(x,t_1)\right)^+ \d x \,\rightarrow \,0,
\end{align*}when $ t_1\, \to\,0$.
We conclude that $u_2\leq u_1$ in $\Omega_T$. By symmetry,  $u_1\leq u_2$.

\medskip
Actually, by this we have also proved the analogous \emph{comparison principle}: If $u_2\leq u_1$ holds on the parabolic boundary, so too does it in the whole domain.
\begin{cor}[Comparison Principle]\label{comparison} Suppose that $u_1$ and $u_2$ are weak solutions   and belong to $L^p(0,T;W^{1,p}(\Omega))$, and that
  $$\frac{\dd}{\dd t}\big(|u_2(x,t)|^{p-2}u_2(x,t)-|u_1(x,t)|^{p-2}u_1(x,t)\big) \,\in\, L^1(\Omega_T).$$
  
  If $(u_2-u_1)^+ \in L^p(0,T;W^{1,p}_0(\Omega))$ and
  \begin{equation*}
  \lim_{t\to 0}\,\int\limits_{\Omega}(|u_2(x,t)|^{p-2}u_2(x,t)-|u_1(x,t)|^{p-2}u_1(x,t)\big)^+\d x\,=\,0,
  \end{equation*} then $u_2\leq u_1$.
  \end{cor}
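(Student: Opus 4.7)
The plan is to run the same argument sketched just above the statement of the corollary (for the third bulleted case) and to verify that its two structural ingredients---the admissibility of the test function $H_\delta((u_2-u_1)^+)$ and the initial condition---remain valid under the weaker hypotheses of the corollary. Write $v = |u_2|^{p-2}u_2 - |u_1|^{p-2}u_1$ for brevity.

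First, I would subtract the weak formulations for $u_1$ and $u_2$ and insert the test function $\zeta = H_\delta((u_2-u_1)^+)$. This $\zeta$ vanishes on the lateral boundary because $(u_2-u_1)^+\in L^p(0,T;W^{1,p}_0(\Omega))$ by hypothesis, which is precisely the admissibility needed. Since only the difference $\dd_t v\in L^1(\Omega_T)$ is available---neither $\dd_t u_j$ nor $\dd_t(|u_j|^{p-2}u_j)$ individually---the subtraction must be performed before any integration by parts, and Steklov averaging is then used to move the time derivative onto $v$. This produces
\begin{equation*}
\int_{t_1}^{t_2}\!\!\int_\Omega \zeta\,\dd_t v\,\d x\d t \,=\, -\frac{1}{\delta}\int_{t_1}^{t_2}\!\!\int_{\{0<u_2-u_1<\delta\}}\!\!\<|\nabla u_2|^{p-2}\nabla u_2-|\nabla u_1|^{p-2}\nabla u_1,\nabla u_2-\nabla u_1\>\d x\d t,
\end{equation*}
whose right-hand side is nonpositive by the standard monotonicity inequality $\<|b|^{p-2}b-|a|^{p-2}a,b-a\>\geq 0$.

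Second, I would let $\delta\to 0$. Because $H_\delta((u_2-u_1)^+)$ is uniformly bounded and converges pointwise to $\chi_{\{u_2>u_1\}}=\chi_{\{v>0\}}$ (the last equality by strict monotonicity of $s\mapsto|s|^{p-2}s$), dominated convergence with the $L^1$ majorant $|\dd_t v|$ gives
\[
\int_{t_1}^{t_2}\!\!\int_\Omega \dd_t(v^+)\,\d x\d t\,\leq\,0.
\]
A further integration in time yields $\int_\Omega v^+(x,t_2)\d x\leq \int_\Omega v^+(x,t_1)\d x$; letting $t_1\to 0$, the right-hand side vanishes by the initial hypothesis of the corollary. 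Hence $v^+\equiv 0$ a.e., so $|u_2|^{p-2}u_2\leq |u_1|^{p-2}u_1$, and strict monotonicity of $s\mapsto|s|^{p-2}s$ converts this to $u_2\leq u_1$.

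The main technical obstacle is the rigorous justification of $\zeta=H_\delta((u_2-u_1)^+)$ as an admissible test function and the transfer of the time derivative from $\zeta$ onto $v$ when neither individual solution is known to have a time derivative. This is handled by approximating $\zeta$ by $C_0^\infty(\Omega_T)$ functions in time and by the usual Steklov-average argument, crucially exploiting the fact that the hypothesis $\dd_t v\in L^1(\Omega_T)$ concerns only the \emph{difference} $v$, and not each $|u_j|^{p-2}u_j$ separately.
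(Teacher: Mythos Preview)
Your proposal is correct and follows essentially the same route as the paper: the paper states that the preceding argument for the third bulleted case (test function $H_\delta((u_2-u_1)^+)$, monotonicity of the $p$-Laplace operator, $\delta\to 0$, then send $t_1\to 0$) already proves the Comparison Principle, and your write-up reproduces exactly this with the appropriate adjustments for the one-sided boundary and initial hypotheses. Your additional care about subtracting the weak formulations \emph{before} transferring the time derivative---since only $\dd_t v\in L^1$ is assumed, not each $\dd_t(|u_j|^{p-2}u_j)$---is a valid technical point that the paper leaves implicit.
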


\section{Convergence and a Galerkin estimate}

From now on we assume that the function $\psi$ representing the Cauchy-Dirichlet boundary data belongs to $C^2(\overline{\Omega_T})$. There exists  one weak solution $u \in C(\overline{\Omega_T})$ for which the Sobolev derivative \break$\dd_t(|u|^{p-2}u)$ exists  and \begin{equation}
\label{timereg}
\ont\Bigl(|\dd_t(|u|^{p-2}u)|^2 + |\nabla u|^p\Bigr)\d x \d t\, < \, \infty
\end{equation}
and $u = \psi$ on the parabolic boundary $\dd_{p}\Omega_T$. Unfortunately, for sign-changing solutions, we cannot exclude the possibility that less regular solutions exist. The existence comes by Galerkin's method, see \cite{AL}. In Section \ref{approximative},  we shall work out expedient explicit estimates, omitted in \cite{AL}.
Solutions obeying the bound (\ref{timereg}) form a closed class. For lack of a better name, we call them \emph{t-regular}. Within this class the comparison principle is valid.

\begin{lem}\label{convergence} Suppose that the t-regular solutions $u_1,u_2,u_3,\ldots$ have the uniform bounds $\|u_k\|_{L^{\infty}(\Omega_T)}\,\leq \,C$ and
    \begin{equation}\label{M}
        \ont\Bigl(|\dd_t(|u_k|^{p-2}u_k)|^2 + |\nabla u_k|^p\Bigr)\d x \d t\,\, \leq \,\, M, \end{equation}
      when $k = 1,2,3,\ldots.$ Then, via a subsequence, $u_k$ converges in $L^p(0,T;W^{1,p}_{\text{loc}}(\Omega))$ to some function $u$ satisfying the same bounds. The Sobolev derivative $\dd_t(|u|^{p-2}u)$   exists  and belongs to $L^2(\Omega_T).$  Moreover, $u$ is a weak solution:
      $$\ont\Bigl(\zeta\,\dd_t(|u|^{p-2}u) +\big\langle |\nabla u|^{p-2}\nabla u,\nabla \zeta \big\rangle \Bigr)\d x \d t \,\, = \,\, 0,$$
      whenever $\zeta \in L^p(0,T;W_0^{1,p}(\Omega))$.
      \end{lem}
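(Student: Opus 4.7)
The plan is to extract weak subsequential limits from the uniform bounds, upgrade $u_k$ to strong convergence by an Aubin--Lions argument applied to $|u_k|^{p-2}u_k$, and identify the nonlinear weak limits via Minty's monotonicity trick. By Banach--Alaoglu, up to a subsequence (not relabeled) $u_k \rightharpoonup u$ in $L^p(0,T;W^{1,p}(\Omega))$ and weak-$*$ in $L^{\infty}(\Omega_T)$, $|\nabla u_k|^{p-2}\nabla u_k \rightharpoonup A$ in $L^{p/(p-1)}(\Omega_T)^n$, and $\dd_t(|u_k|^{p-2}u_k)\rightharpoonup w$ in $L^2(\Omega_T)$. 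The chain-rule identity $\nabla(|u_k|^{p-2}u_k)=(p-1)|u_k|^{p-2}\nabla u_k$ and the $L^\infty$ bound give a uniform bound on $|u_k|^{p-2}u_k$ in $L^p(0,T;W^{1,p}(\Omega))$; combined with the bound on its time derivative, Aubin--Lions yields strong $L^p(\Omega_T)$-convergence of $|u_k|^{p-2}u_k$ and, on a further subsequence, a.e.\ convergence in $\Omega_T$. Since $s\mapsto|s|^{p-2}s$ is a homeomorphism of $\R$, $u_k\to u$ a.e., and bounded convergence upgrades this to $u_k\to u$ in $L^q(\Omega_T)$ for every $q<\infty$. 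Testing against $C_0^{\infty}(\Omega_T)$ identifies $w=\dd_t(|u|^{p-2}u)$, and weak lower semicontinuity propagates the bound $M$ to $u$.

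The core difficulty is to show $A=|\nabla u|^{p-2}\nabla u$. Because the $u_k$ need not share boundary traces with $u$, $u_k-u$ is not an admissible global test function, so I localize with a cutoff $\eta\in C_0^\infty(\Omega)$, $0\le\eta\le 1$, and test the equation for $u_k$ against $\zeta=\eta(u_k-u)\in L^p(0,T;W^{1,p}_0(\Omega))$. Rearranging yields
\begin{align*}
\ont\eta\,|\nabla u_k|^{p-2}\nabla u_k\cdot\nabla u_k\,\d x\d t
&=-\ont\eta(u_k-u)\,\dd_t(|u_k|^{p-2}u_k)\,\d x\d t\\
&\quad+\ont\eta\,|\nabla u_k|^{p-2}\nabla u_k\cdot\nabla u\,\d x\d t\\
&\quad-\ont(u_k-u)\,|\nabla u_k|^{p-2}\nabla u_k\cdot\nabla\eta\,\d x\d t.
\end{align*}
Strong $L^2$-convergence $u_k\to u$, the $L^2$-bound on $\dd_t(|u_k|^{p-2}u_k)$, and the $L^{p/(p-1)}$-bound on the flux kill the first and third right-hand terms, and weak convergence sends the middle one to $\ont\eta\,A\cdot\nabla u\,\d x\d t$. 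Hence $\limsup_k \ont \eta\,|\nabla u_k|^{p-2}\nabla u_k\cdot\nabla u_k\,\d x\d t\le\ont\eta\,A\cdot\nabla u\,\d x\d t$. Combined with the elementary monotonicity $\<|b|^{p-2}b-|V|^{p-2}V,b-V\>\ge 0$ applied with $b=\nabla u_k$ and arbitrary $V\in L^p(\Omega_T)^n$, this gives
$$0\,\le\,\ont\eta\,\<A-|V|^{p-2}V,\nabla u-V\>\,\d x\d t.$$
Minty's trick $V=\nabla u+\lambda W$, $\lambda\to 0^\pm$, yields $A=|\nabla u|^{p-2}\nabla u$ on $\supp\eta$; exhausting $\Omega_T$ by such $\eta$ concludes the identification.

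With all limits identified, passage to the limit in the weak formulation for $u_k$ against any $\zeta\in L^p(0,T;W^{1,p}_0(\Omega))$ is immediate and produces the equation for $u$. Local strong $L^p$-convergence $\nabla u_k\to\nabla u$ follows a posteriori: since $A=|\nabla u|^{p-2}\nabla u$, the lim sup above reads $\limsup_k\ont\eta\,|\nabla u_k|^p\,\d x\d t\le\ont\eta\,|\nabla u|^p\,\d x\d t$, while weak lower semicontinuity gives the reverse inequality for the lim inf, so uniform convexity of $L^p$ delivers strong convergence $\nabla u_k\to\nabla u$ in $L^p_{\text{loc}}(\Omega_T)$. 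The principal obstacle is the flux identification: distinct boundary traces preclude the natural test $u_k-u$ and force the localized Minty argument, whose parabolic term is controlled precisely by the strong-weak $L^2$-pairing furnished by the compactness step.
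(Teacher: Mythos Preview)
Your proof is correct and follows the same architecture as the paper's: weak compactness from the uniform bounds, Aubin--Lions applied to $|u_k|^{p-2}u_k$ to get strong convergence of $u_k$, then a localized test function $\eta(u_k-u)$ (the paper uses $\theta^p(u_k-u)$) together with monotonicity to handle the gradient term. The only differences are in execution rather than strategy. You identify the limit of $|u_k|^{p-2}u_k$ via a.e.\ convergence plus the homeomorphism $s\mapsto|s|^{p-2}s$, which is cleaner than the paper's Minty-type identification at that step; conversely, for the gradients the paper directly shows that the monotone quantity $\iint\theta^p\langle|\nabla u_k|^{p-2}\nabla u_k-|\nabla u|^{p-2}\nabla u,\nabla u_k-\nabla u\rangle$ tends to zero and reads off local strong convergence at once, whereas you take the slightly longer detour through Minty's trick to identify $A$ first and recover strong convergence afterward. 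Both routes are standard and equivalent here; your limsup inequality is in fact an equality, so the a posteriori step reduces to exactly the paper's vanishing monotone term.
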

\begin{proof} We obtain a function $u$ such that
  \begin{equation*}
  \begin{cases}
  u_k\, \rightharpoonup \, u \quad \text{weakly in}\, L^p(\Omega_T)\\
    \nabla u_k\, \rightharpoonup \, \nabla u \quad \text{weakly in}\, L^p(\Omega_T)\\
    \dd_t(|u_k|^{p-2}u_k)\, \rightharpoonup \, \dd_t(|u|^{p-2}u)  \quad \text{weakly in}\, L^2(\Omega_T)\\
    u_k \,\rightarrow \,  u \quad \text{strongly in}  \, L^p(\Omega_T).
  \end{cases}\end{equation*}
  
  Lower semicontinuity of convex integrals under weak convergence implies that the bound (\ref{M}) also holds for the function $u$. The strong convergence stated above requires a brief argument. First, observing uniform bounds,
  one can extract weakly convergent subsequences for functions and gradients. Moreover, observing the uniform bound for $\dd_t(|u_k|^{p-2}u_k)$, we may extract a strongly converging subsequence for $|u_k|^{p-2}u_k$ to some limit $v$ by the Lions-Aubin theorem. Since the convergence is strong, we obtain for some $w$ that
    \begin{align*}
       0\le &\ont (\abs{w}^{p-2}w-\abs{u_k}^{p-2}u_k)(w-u_k)\d x\d t\\
       \to& \ont (\abs{w}^{p-2}w-v)(w-u)\d x\d t.
    \end{align*}
    
    Then, redefining $w$ as $u+\delta \phi$ for $\delta>0$ and passing to $\delta\to 0$, we obtain
    \begin{align*}
        0\le \ont (\abs{u}^{p-2}u-v) \phi \d x\d t,
    \end{align*}
    and thus $v=\abs{u}^{p-2}u$ a.e., and further $\abs{u_k}^{p-2}u_k\to \abs{u}^{p-2}u$ strongly in $L^p(\Omega_T)$.
      Then the claimed strong convergence follows by using an algebraic and H\"older's inequalities that give us
\begin{align*}
    \ont \abs{u_k-u}^p \d x \d t\le C \ont \abs{\abs{u_k}^{p-2}u_k-\abs{u}^{p-2}u}^{\frac{p}{p-1}} \d x \d t,
\end{align*}
when $p>2$.

  We claim that  $\nabla u_k \to \nabla u$ locally strongly in $L^p$. We choose a function $\theta=\theta(x)$ in $C^\infty_0(\Om)$, $0\le \theta\le 1$.
   Now, $\theta^p (u_k-u)$
      will do as the test function in the equation for $u_k$. Arranging, we get
  \begin{align*}
    \mathrm{I}_k \,&=\,\ont \theta^{\,p} (u_k-u)\,\dd_t(|u_k|^{p-2}u_k) \d x \d t\\
    &=\,- \ont \theta^p \big\langle|\nabla u_k|^{p-2}\nabla u_k  - |\nabla u|^{p-2}\nabla u, \nabla u_k - \nabla u \big\rangle \d x \d t\\
    &\phantom{=}\,+ \ont \theta^p\big\langle |\nabla u|^{p-2}\nabla u, \nabla u_k - \nabla u \big\rangle \d x \d t\\
    &\phantom{=}\,- \ont (u_k-u)\big\langle |\nabla u_k|^{p-2}\nabla u_k, \nabla \theta^p \big\rangle \d x \d t.
  \end{align*}
  
The last two integrals obviously converge to zero when $k\to \infty$, and so does the integral $\mathrm{I}_k$, because
  $$
  |\mathrm{I}_k|\,\leq\, \sqrt{M}\,\Bigl( \ont \abs{u_k-u}^2 \d x \d t \Bigr)^{\frac{1}{2}}.
  $$
  
  Therefore, we also have that
  \begin{align*}
\lim_{k\to \infty} \ont \theta^p \big\langle|\nabla u_k|^{p-2}\nabla u_k  - |\nabla u|^{p-2}\nabla u, \nabla u_k - \nabla u \big\rangle \d x \d t\,=\,0.
\end{align*}

   An elementary inequality shows that this is possible only if \break$\theta\nabla u_k \to \theta\nabla u$ (strongly) in $L^p(\Omega_T)$, as we claimed.

  Thus, we may proceed to the limit under the integral sign in
\begin{align*}
\ont \dd_t\bigl(|u_k|^{p-2}u_k\bigr)\, \zeta + \big\langle |\nabla u_k|^{p-2}\nabla u_k, \nabla \zeta \big\rangle \Bigr) \d x \d t\,=\,0
\end{align*}
 to arrive at the claim.
  \end{proof}

 \noindent \textbf{Remark.} We shall only need the special case
  \begin{align*}
  u_k=\psi+\gamma_k\quad \text{on } \partial_p \Om_T,
\end{align*}
with \emph{constants} $\gamma_k\to 0$.
Now the test function $\zeta=u_k-u-\gamma_k$ yields even global strong convergence; $u_k\to u$ in $L^p(0,T;W^{1,p}(\Om))$.
\medskip

The next estimate is written here only for the case $p \geq 2$. The proof is postponed to Section \ref{approximative}.
\begin{thm}[Galerkin estimate] \label{Galerkin}
  Given $\psi \in C^2(\overline{\Omega_T})$, there exists a t-regular solution $u$ attaining the  boundary values $\psi$ and satisfying
  \begin{align}
     \int\limits_0^{T^*}\!\!\! \int\limits_{\Omega}&\Bigl(\Big\vert\frac{\dd}{\dd t}\bigl(|u|^{\frac{p-2}{2}}u\bigr)\Big\vert^2 +|\nabla u|^p + |u|^p\Bigr) \d x \d t  +\int\limits_{\Omega}|\nabla u(x,T^*)|^p\d x\nonumber\\
    \leq\,\,&C_T\Biggl\{\int\limits_{\Omega}\bigl(|\psi(x,0)|^p +   |\nabla \psi(x,0)|^p\bigr)\d x \label{Q}\\
     &\quad + \int\limits_0^{T^*}\!\!\! \int\limits_{\Omega}\Bigl(|\psi|^p + |\nabla \psi|^p +\big\vert \frac{\dd \psi}{\dd t}\big\vert^p +\Big\vert \frac{\dd \nabla \psi}{\dd t}\Big\vert^p \Bigr)\d x \d t \Biggr\}\nonumber
  \end{align}
  for a.e. $T^* \leq T$.
\end{thm}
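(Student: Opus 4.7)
The plan is to derive the estimate at the Galerkin level and then pass to the limit using Lemma~\ref{convergence}. Fix an orthonormal $L^2$-basis $\{w_j\}_{j=1}^\infty \subset C_0^\infty(\Om)$ of $W_0^{1,p}(\Om)$ (Dirichlet Laplace eigenfunctions will do) and seek approximations
$$u^{(N)}(x,t) = \psi(x,t) + \sum_{i=1}^N c_i^{(N)}(t)\, w_i(x),\qquad c_i^{(N)}(0)=0,$$
satisfying
$$\int_\Om w_j \,\dd_t(|u^{(N)}|^{p-2}u^{(N)})\,\d x + \int_\Om \< |\nabla u^{(N)}|^{p-2}\nabla u^{(N)},\nabla w_j\>\,\d x = 0$$
for $j=1,\dots,N$. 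Since $s\mapsto|s|^{p-2}s$ is degenerate at $s=0$ when $p>2$, local solvability of this ODE system is not immediate; a mild regularization such as replacing $|s|^{p-2}s$ by $s(s^2+\eta)^{(p-2)/2}$ produces a non-degenerate system to which Carath\'eodory existence applies, after which the a priori bounds derived below let us send $\eta\to 0$ and extend the solution to $[0,T]$.

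Two test functions drive the estimates. First, testing against $u^{(N)}-\psi$ (admissible since it lies in the span of the $w_j$) and combining $\dd_t(|u|^{p-2}u)\cdot u = \tfrac{p-1}{p}\dd_t|u|^p$ with Young's inequality (absorbing a small multiple of $\|\nabla u^{(N)}\|_{L^p}^p$) yields a bound for $\ont(|u^{(N)}|^p+|\nabla u^{(N)}|^p)\,\d x\,\d t$ by the right-hand side of (\ref{Q}). Second---and this is where the precise form of the estimate is tailored to Trudinger's equation---test against $\dd_t u^{(N)}-\dd_t\psi = \sum_i (c_i^{(N)})'w_i$, also admissible. The identity
$$\dd_t(|u|^{p-2}u)\cdot \dd_t u = (p-1)|u|^{p-2}(\dd_t u)^2 = \tfrac{4(p-1)}{p^2}\bigl|\dd_t(|u|^{(p-2)/2}u)\bigr|^2$$
converts the time term into the desired $\ont|\dd_t(|u^{(N)}|^{(p-2)/2}u^{(N)})|^2\,\d x\,\d t$, while the spatial term gives $\tfrac{1}{p}\tfrac{\d}{\d t}\int_\Om|\nabla u^{(N)}|^p$, integrating to the terminal quantity $\int_\Om|\nabla u^{(N)}(x,T^*)|^p\,\d x$ on the left (with $|\nabla\psi(x,0)|^p$ appearing on the right from the initial condition). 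The cross terms $|u|^{p-2}\dd_t u\cdot\dd_t\psi$ and $|\nabla u|^{p-2}\nabla u\cdot \nabla\dd_t\psi$ are tamed by Young's inequality with conjugate exponents $(p/(p-2),p/2)$ and $(p/(p-1),p)$, producing lower-order contributions $|u|^p+|\dd_t\psi|^p$ and $|\nabla u|^p+|\nabla\dd_t\psi|^p$; the $u$-terms are absorbed via the first energy estimate, with a Gronwall step if any residual remains.

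With uniform $N$-bounds in hand, a subsequence converges weakly in $L^p(0,T;W^{1,p}(\Om))$ and, by Aubin--Lions as in the proof of Lemma~\ref{convergence}, strongly in $L^p(\Om_T)$; the monotonicity argument used there identifies the weak $L^{p/(p-1)}$-limit of $|u^{(N)}|^{p-2}u^{(N)}$ as $|u|^{p-2}u$ and yields local strong convergence of $\nabla u^{(N)}$, so the limit $u$ is a weak solution taking the boundary values $\psi$. Lower semicontinuity of convex integrals under weak convergence transfers all the bounds to $u$, including $\int_\Om|\nabla u(x,T^*)|^p\,\d x$ for a.e.\ $T^*$ as a weak-$L^p$ slice limit. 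The two foreseeable obstacles are: (i) the initial ODE degeneracy at the zeros of $u^{(N)}$, handled by the $\eta$-regularization; and (ii) the restriction $p\geq 2$, which is essential in the Young-inequality step that splits $|u|^{p-2}(\dd_t\psi)^2$ into $|u|^p + |\dd_t\psi|^p$ via the conjugate pair $(p/(p-2), p/2)$.
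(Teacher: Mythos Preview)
Your proposal is essentially correct and parallels the paper's argument, but it takes a genuinely different route at the approximation level. The paper does \emph{not} use a continuous-in-time Galerkin ODE; instead it combines the Galerkin basis with an implicit time-discretization (step $h$), building piecewise-constant-in-time approximants $u_{hm}$ recursively by minimizing a convex variational integral at each time step. This sidesteps precisely the ODE degeneracy you flag as obstacle (i): no regularization parameter $\eta$ is needed, and existence at each step is elementary. The key test function in the paper is the backward difference $\dd_t^{-h}(u_{hm}-\psi_h)$ rather than your $\dd_t(u^{(N)}-\psi)$; consequently your pointwise identity $\dd_t(|u|^{p-2}u)\,\dd_t u = \tfrac{4(p-1)}{p^2}\bigl|\dd_t(|u|^{(p-2)/2}u)\bigr|^2$ becomes in the paper the monotonicity \emph{inequality} $\bigl\langle|b|^{p-2}b-|a|^{p-2}a,\,b-a\bigr\rangle \ge \tfrac{4}{p^2}\bigl||b|^{(p-2)/2}b-|a|^{(p-2)/2}a\bigr|^2$, and similarly the spatial term is handled via the convexity inequality $p\langle|b|^{p-2}b,b-a\rangle\ge|b|^p-|a|^p$ rather than the exact derivative $\tfrac1p\tfrac{\d}{\d t}\int|\nabla u|^p$. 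The cross term with $\dd_t\psi$ is also treated slightly differently: the paper bounds $\dd_t^{-h}(|u_{hm}|^{p-2}u_{hm})\,\dd_t^{-h}\psi_h$ directly by Young with exponents $(p,q)$ and then absorbs the resulting $|\dd_t^{-h}(|u_{hm}|^{p-2}u_{hm})|^q$ term, whereas you split $|u|^{p-2}\dd_t u\,\dd_t\psi$ first. After that the Gr\"onwall step and passage to the limit are the same in spirit. Your approach has somewhat cleaner algebra (equalities instead of inequalities) but carries an extra limit $\eta\to 0$ whose uniformity you should make explicit; the paper's time-discrete approach trades that for two discrete parameters $h,m$ but needs no regularization of the nonlinearity.
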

Notice that the majorant with $\psi$ is stable under perturbations with a constant $\gamma$:  When
$$\psi(x,t)+ \gamma \quad\text{replaces}\quad \psi(x,t),$$
the terms with derivatives remain unchanged.

\section{Proof of Theorem \ref{mainresult}}
\label{gammaproof}

Let $\gamma$ be a (small) constant and consider the t-regular solution $u_{\gamma} = u_{\gamma}(x,t)$ with boundary values $\psi(x,t) +\gamma$. It exists and satisfies the Galerkin estimate in Theorem \ref{Galerkin} uniformly with respect to $\gamma$. We use the three solutions
\begin{equation*}\begin{cases}
    u_{+\gamma}\quad\text{with boundary values}\quad\psi + \gamma\\
   \, u_{0\phantom{+}}\quad\text{with boundary values}\quad\psi + 0\\
    u_{-\gamma}\quad\text{with boundary values}\quad\psi - \gamma
\end{cases}\end{equation*}
for $\gamma > 0$. Here, $u_0$ is the $t$-regular solution (corresponding to $\gamma = 0$).  Notice that $u_{-\gamma}$ \emph{can take negative values}. For $p>2$, we have the uniform bound
$$\ont\Bigl(\Big\vert\frac{\dd}{\dd t}\bigl (|u_{\gamma}|^{p-2} u_{\gamma}\bigr)\Big\vert^2  +|\nabla  u_{\gamma}|^p \Bigr)\d x \d t\,\leq\,M$$
for all $|\gamma| \leq 1$.
 This follows  from Theorem \ref{Galerkin} by using
$$\Big\vert \frac{\dd}{\dd t}\big(|u_{\gamma}|^{p-2}u_{\gamma}\big)\Big\vert^2 = \frac{4}{q^2}|u_{\gamma} |^{p-2}\Big\vert\frac{\dd}{\dd t}\big(|u_{\gamma} |^{\frac{p-2}{2}}u_{\gamma})\Big\vert^2
  \leq 4 \|\psi\! + \!\gamma\|^{p-2}_{\infty}\Big\vert\frac{\dd}{\dd t}\big(|u_{\gamma} |^{\frac{p-2}{2}} u_{\gamma}\big)\Big\vert^2 ,$$
where $1/p+1/q=1$,  and at the last step the maximum principle was used.\footnote{The first inequality follows by selecting $v:=\abs{u}^{\frac{p-2}{2}}u$, and observing that
$\abs{v}^\frac{p-2}{p}v=\abs{u}^{\frac{p}{2}\frac{p-2}{p}}\abs{u}^{\frac{p-2}{2}}u =\abs{u}^{p-2}u.$ Thus $\partial_t(\abs{u}^{p-2}u)=\frac{2(p-1)}{p} \abs{v}^{\frac{p-1}{p}}v_t=\frac{2(p-1)}{p} \abs{u}^{\frac{p-2}{2}}v_t.$
}

  The comparison principle (Corollary \ref{comparison}) for t-regular solutions implies
  $$ u_{-\gamma}\,\leq\, u_0 \,\leq\,   u_{+\gamma}\quad\text{in}\quad \Omega_T.$$
  
  It follows from Lemma \ref{convergence} and the remark thereafter that $ u_{+\gamma}$ converges to some t-regular solution as $\gamma \to 0+$, as does $ u_{-\gamma}$. The limit solution must be $u_0$ due to the uniqueness within the class of t-regular solutions. At least in $L^p(\Omega_T)$ we have (for a subsequence)
  \begin{equation}\label{strangle}
    \lim_{\gamma \to 0} u_{-\gamma}\,=\,u_0\,=\,\lim_{\gamma \to 0} u_{+\gamma}.
  \end{equation}

 Now let $u$ denote an \emph{arbitrary} solution with boundary values $\psi$. From \cite{BDL} we have that $u\in C(\overline{\Omega_T})$. \emph{A priori} we do not have access to its eventual time derivative. We claim that, indeed, $ u = u_0$, which ensures uniqueness. To this end, we show that
  $$   u_{-\gamma}\,\leq\, u \,\leq\,   u_{+\gamma}, $$
  from which the desired uniqueness follows immediately by (\ref{strangle}). Since $ u_{+\gamma}\geq \gamma > 0$ by the minimum principle, we conclude that $ u_{+\gamma} \geq u$ by Theorem 1 in \cite{LL}. \emph{Strict} positivity also on the boundary was decisive in \cite{LL}.

  To deduce that $ u_{-\gamma}\,\leq\, u$, we first reduce the proof to the situation when $u > 0$ in the interior. (In any case $u \geq 0$.) To this end, suppose that $u(x_0,t_0) = 0$ at some point $(x_0,t_0)$ in $\Omega_T$. By Harnack's inequality $u(x,t) = 0$ in $\Omega_{t_0}$, take the supremum $t^* = \sup\{t_0\}$ over all such times. If $t^* = T$ we are done: $u \equiv 0.$ If $t^* < T$, then $u = 0$ in $\overline{\Omega_{t^*}}$. Then, $\psi = 0$ on $\dd_p \Omega_{t^*}$, and so $  u_{-\gamma} = - \gamma$ on  $\dd_p \Omega_{t^*}$. It follows that $u_{-\gamma} = - \gamma$ in the whole $\Omega_{t^*}$, and so the comparison $ u_{-\gamma}\,\leq\, u $ is valid in this domain.
  Thus, it only remains to prove that  $ u_{-\gamma}\,\leq\, u$ in the remaining part $\Omega \times (t^*,T)$. For the proof, we may as well assume that $u> 0$ in $\Omega_T$, i.e. write $t^* =0$.

  By uniform continuity $u_{-\gamma} < u - \frac{\gamma}{2}$ in a zone near the parabolic boundary, therefore the open set
  $$A_{\gamma} \,=\,\left\{ u_{-\gamma}\,>\, u\right\}\,\subset\subset\,\Omega_T$$
  is strictly interior (or empty). It cannot touch the parabolic boundary. (This was the purpose of introducing $u_{-\gamma}$.) Now, $\inf_{A_{\gamma}}\{u\}  > 0.$ In this case, with \emph{strict positivity} the test function
  $$\phi(x,t)\,=\,\eta_{\varepsilon}(t)H_{\delta}\bigl([| u_{-\gamma}|^{p-2} u_{-\gamma} - |u|^{p-2}u]_h\bigr)$$
  in \cite{LL} works well ($\eta_{\varepsilon}(t)$ is a cut-off function, $H_{\delta}$ is an approximation of the Heaviside function, and $[...]_h$ is the usual Stekloff average). Indeed, one can follow pages 405--408 in \cite{LL}, but now using the above test function to prove that $ u_{-\gamma}\,\leq\, u $ in $A_{\gamma}$. We conclude that the inequality
  $ u_{-\gamma}\,\leq\,u$ holds everywhere.\qquad \qquad$\Box$

  \section{Galerkin's method}\label{Method}

  Select (convenient) functions $e_j = e_j(x)$ in $W^{1,p}_0(\Omega) \cap C(\overline{\Omega})$ so that the subspace
  $$\mathrm{span}\left\{e_1,e_2,\ldots,e_m,\ldots\right\}$$
  is dense in $W^{1,p}_0(\Omega)$. They are independent. Divide the time interval by the points
  $$0,h,2h,\ldots,mh =t_h,$$
  where $t_h = T$ (or $t_h \to $ some selected $t$).
  Replace the boundary values $\psi \in C^2$ by the averages
  $$ \psi_h(x,t)\,=\,\frac{1}{h} \!\int\limits _{(k-1)h}^{kh}\!\!\psi(x,\tau)\d \tau,\qquad (k-1)h<t\le kh,$$
  in each subinterval $((k-1)h,kh].$  Then, $\psi_h$ \emph{is time independent in each subinterval}. We construct  approximative solutions
\begin{equation*}
  u_{hm}(x,t)\,=\,\psi_h(x,t)+ \sum_{j=1}^m\alpha_{hm,j}(t)e_j(x),\qquad \alpha_{hm,j} \in L^{\infty}(0,T),
\end{equation*}
with unknown \emph{ piecewise time independent  coefficients} $\alpha_{hm,j} $, which will be determined below. Thus, $u_{hm}(x,t)$ too is time independent in each subinterval. We use the notation
$$\dd^{-h}_tf(x,t)\,=\,\frac{f(x,t)-f(x,t-h)}{h}$$
for the  difference ratios backward in time.

\begin{lem} \label{alpha} There exist $\alpha_{hm,j}$ so that the equation \begin{gather*}
  \int\limits_{\Omega} \dd^{-h}_t\bigl(| u_{hm}(x,t)|^{p-2}u_{hm}(x,t)\bigr)\zeta(x) \d x\\
  \hspace{6em}+ \int\limits_{\Omega}|\nabla  u_{hm}(x,t)|^{p-2}\big\langle \nabla  u_{hm}(x,t),\nabla \zeta(x)\big\rangle \d x\,\,=\,\,0
  \end{gather*}
  holds for a.e. fixed $t$ when $\zeta \in \mathrm{span}\{e_1,e_2,\ldots,e_m\}.$
\end{lem}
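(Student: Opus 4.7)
The strategy is induction on the time index $k$. Since $\psi_h$ and (by construction) $u_{hm}$ are constant in $t$ on each subinterval $((k-1)h, kh]$, I write $u_{hm}(x,t) = u^{(k)}(x) := \psi_h^{(k)}(x) + \sum_{j=1}^m \alpha_j^{(k)} e_j(x)$ there, so the backward difference becomes $\bigl(|u^{(k)}|^{p-2}u^{(k)} - |u^{(k-1)}|^{p-2}u^{(k-1)}\bigr)/h$. With the natural initialization $u^{(0)}(x) := \psi(x,0)$, I suppose inductively that $u^{(k-1)}$ has already been produced; it then suffices to find $\alpha^{(k)} = (\alpha_1^{(k)},\ldots,\alpha_m^{(k)}) \in \mathbb{R}^m$ making the lemma's equation hold for $\zeta = e_1,\ldots,e_m$, since by linearity this covers their entire span.

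I realize $\alpha^{(k)}$ as the minimizer of the functional $F : \mathbb{R}^m \to \mathbb{R}$,
\begin{equation*}
F(\alpha) := \frac{1}{ph}\int_\Omega |u^{(k)}(\alpha)|^p\,\d x + \frac{1}{p}\int_\Omega |\nabla u^{(k)}(\alpha)|^p\,\d x - \frac{1}{h}\int_\Omega |u^{(k-1)}|^{p-2}u^{(k-1)}\,u^{(k)}(\alpha)\,\d x,
\end{equation*}
where $u^{(k)}(\alpha) := \psi_h^{(k)} + \sum_j \alpha_j e_j$ depends affinely on $\alpha$. A direct differentiation shows that $\partial F/\partial \alpha_i$ equals exactly the expression the lemma asks to vanish, tested with $\zeta = e_i$; hence any critical point of $F$ solves the $m\times m$ nonlinear system.

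It remains to show that $F$ attains a (unique) minimum. Strict convexity is inherited from the strict convexity of $s \mapsto |s|^p$ on $\mathbb{R}$ and $v \mapsto |v|^p$ on $\mathbb{R}^n$ (valid for $p>1$), together with the fact that $\{e_j\}$ is linearly independent in $W^{1,p}_0(\Omega)$: so $\sum_j \alpha_j e_j$ is nonzero on a set of positive measure whenever $\alpha \neq 0$, and the same holds for $\sum_j \alpha_j \nabla e_j$ by Poincar\'e. Coercivity is supplied by the $L^p$-term: on the finite-dimensional subspace $\mathrm{span}\{e_1,\ldots,e_m\}$ all norms are equivalent, so
\begin{equation*}
\int_\Omega |u^{(k)}(\alpha)|^p\,\d x \,\geq\, c|\alpha|^p - C,
\end{equation*}
which dominates the linear contribution as $|\alpha| \to \infty$. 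A continuous, strictly convex, coercive function on $\mathbb{R}^m$ admits a unique minimum, yielding $\alpha^{(k)}$. Iterating in $k$ produces piecewise-constant coefficients $\alpha_{hm,j}$, automatically in $L^\infty(0,T)$.

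The only step demanding genuine care is coercivity, which reduces to the elementary equivalence of norms on the finite-dimensional span; the rest is standard finite-dimensional convex calculus. Crucially, no uniform bounds in $h$ or $m$ are needed at this stage—those estimates, which are the heart of Galerkin's method, are postponed to Section~\ref{approximative}.
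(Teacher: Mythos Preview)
Your proof is correct and follows essentially the same approach as the paper: both set up the identical variational functional and obtain the coefficients at each time step as its minimizer over $V_m$, with the Euler--Lagrange equation yielding the desired identity. The only cosmetic difference is that you argue existence of the minimizer directly via finite-dimensional strict convexity and coercivity, whereas the paper passes through a minimizing sequence and weak compactness in $W^{1,p}_0(\Omega)$---overkill on a finite-dimensional subspace, but harmless.
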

\begin{proof} By extension, we define $\psi(x,t)=\psi(x,0)$ when $-h\leq t\leq 0$ so that the recursive procedure
  \begin{align*}& \frac{1}{h}
    \int\limits_{\Omega} | u_{hm}(x,t)|^{p-2}u_{hm}(x,t)\zeta(x) \d x\\
   & \qquad  +\int\limits_{\Omega}|\nabla  u_{hm}(x,t)|^{p-2}\big\langle \nabla  u_{hm}(x,t),\nabla \zeta(x)\big\rangle \d x\\
    &= \, \frac{1}{h}
    \int\limits_{\Omega} | u_{hm}(x,t-h)|^{p-2}u_{hm}(x,t-h)\zeta(x) \d x
  \end{align*}
  can start from $u_{hm}(x,t) = \psi(x,0)$ when $t \leq 0.$  Thus, $\alpha_{hm,j}(t) = 0$ when $t \leq 0$. So far, we have followed \cite{AL}.

  We proceed by recursion. Suppose that we have obtained the coefficients \break$\alpha_{hm,j}(x,t-h)$. Then, the existence of $\alpha_{hm,j}(x,t)$ will come from the Euler-Lagrange equation of the variational integral
  \begin{gather*}\mathcal{F}_{hm}(w)\,=\,\frac{1}{p}\int\limits_{\Omega}|\nabla(w(x)+\psi_h(x,t))|^p\d x\\
  \hspace{-.3 em}  +\frac{1}{h}\int\limits_{\Omega}\Bigl(\frac{|w(x)\!+\!\psi_h(x,t)|^p}{p} - | u_{hm}(x,t\!-\!h)|^{p-2}u_{hm}(x,t\!-\!h)(w(x)\!+\!\psi_h(x,t))\Bigr)\d x.
  \end{gather*}
  
The integral is time independent in the subinterval $(k-1)h<t<kh$! Keep $t$ in this interval.  The integral has a minimum among all $w = w(x)$ belonging to the subspace
  $$ V_m\,=\, \mathrm{span}\{e_1,e_2,\ldots,e_m\}$$
  when $h,m$ (and $t$) are kept fixed. Indeed,
  \begin{align*}
    \mathcal{F}_{hm}(w)\,&\geq\,\frac{1}{p}\int\limits_{\Omega}|\nabla(w+\psi_h)|^p\d x\\
    &\hspace{1 em}+\frac{1}{h}\int\limits_{\Omega}\Bigl(\frac{|w+\psi_h|^p}{2p} - c(p)| u_{hm}(x,t-h)|^{p} \Bigr) \d x\\
    &\geq -\frac{c(p)}{h}\int\limits_{\Omega}|u_{hm}(x,t-h)|^{p}\d x
  \end{align*}
  implies  that
  $$\mu\,=\, \inf_{w}\mathcal{F}_{hm}(w)\, > - \infty.$$
  
  Choose a minimizing sequence $w_1,w_2,w_3,\ldots$ so that
  $$\mu\,\leq\, \mathcal{F}_{hm}(w_i)\,< \mu +\frac{1}{i}.$$
  
  The bound
  \begin{align*}
    \frac{1}{p}&\int\limits_{\Omega}|\nabla(w_i+\psi_h)|^p\d x + \frac{1}{2ph}\int\limits_{\Omega}|w_i+\psi_h|^{p}\d x\\
  & \leq  \mathcal{F}_{hm}(w_i) + \frac{c(p)}{h} \int\limits _{\Omega}|u_{hm}(x,t-h)|^{p}\d x\\
    & \leq \mu + 1 + \frac{c(p)}{h} \int\limits_{\Omega}|u_{hm}(x,t-h)|^{p}\d x
    \end{align*}\\
  shows that we can extract a weakly convergent subsequence
  $w_{i_\nu}\, \rightharpoonup w$ in $W^{1,p}_0(\Omega).$ Clearly, the limit $w$ belongs to the subspace $V_m$, and thus
  $$w(x)\,=\,\sum_{j=1}^m\alpha_je_j(x)$$
  \emph{defines} the  coefficients $\alpha_{hm,j}(t) = \alpha_j$ when $t$ belongs to the \emph{next} time interval; i.e., if $\alpha_{hm,j}(t)$ is known when $t\leq (k-1)h$, then we obtain $\alpha_{hm,j}(t)$ extended to $t\leq kh$. Notice that $\alpha_j$ is constant in the subinterval. We shall see that this is in accordance with Lemma \ref{alpha}.

  By convexity, as before
  $$\mu\,\leq\,\mathcal{F}_{hm}(w)\,\leq\,  \liminf_{\nu \to \infty}\mathcal{F}_{hm}(w_{i_\nu})\,=\,\mu$$
  and so the minimum is attained.
  The Euler-Lagrange equation comes from
  $$\lim_{\varepsilon \to 0}\,\frac{\mathcal{F}_{hm}(w+\varepsilon \zeta) - \mathcal{F}_{hm}(w)}{\varepsilon}\,=\,0,$$
  although only for $\zeta$ restricted to the subspace $V_m$. Written out,
  \begin{gather*}
    \int\limits_{\Omega}|\nabla(w+\psi_h)|^{p-2}\big\langle \nabla(w+\psi_h),\nabla \zeta \big\rangle \d x\\
    + \frac{1}{h} \int\limits_{\Omega}\bigl(|(w+\psi_h)|^{p-2}(w+\psi_h)\,\zeta - |u_{hm}(x,t\!-\!h)|^{p-2}u_{hm}(x,t\!-\!h)\,\zeta\bigr) \d x\,=\,0.
  \end{gather*}
  
  Use the obtained $w$ to define
  $$u_{hm}(x,t) \, = \,\psi_h(x,t) + \sum_{j=1}^m\alpha_{hm,j}(t)e_j(x)$$
  in the ``next'' interval. The equation in Lemma \ref{alpha} is  now verified.
\end{proof}

\subsection{Estimates for the approximative solutions }\label{approximative}

Related to Galerkin's method in \cite{AL}, we shall provide some expedient uniform estimates for $u_{hm}$ in Lemma~\ref{alpha}  aiming at (\ref{main}) below. After establishing this, we can pass to a limit to obtain Theorem~\ref{Galerkin}.

We restrict the calculations to the case $p \geq 2$.
The test function $\zeta = \dd_t^{-h}(u_{hm} - \psi_h)$ is admissible in Lemma \ref{alpha}. Arranging the terms and integrating with respect to $t$, we arrive at
\begin{align*}
  &\ont\dd_t^{-h}\bigl(|u_{hm}|^{p-2} u_{hm}\bigr)\dd_t^{-h} u_{hm}\d x \d t \qquad &\mbox{\phantom{II}I}\\
 & \hspace{1em}+\, \ont \Bigl\langle|\nabla u_{hm}|^{p-2}\nabla u_{hm}, \dd_t^{-h}(\nabla u_{hm})\Bigr\rangle\d x \d t \qquad &\mbox{\phantom{I}II}\\
&  =\ \ont \dd_t^{-h}\bigl(|u_{hm}|^{p-2} u_{hm}\bigr)\dd_t^{-h} \psi_h\d x \d t \qquad &\mbox{III}\\
 & \hspace{1 em}+\,  \ont \Bigl\langle|\nabla u_{hm}|^{p-2}\nabla u_{hm},\dd_t^{-h}(\nabla \psi_h)\Bigr\rangle\d x \d t. \qquad &\mbox{IV}
\end{align*}

We shall estimate the terms, beginning with the main term, i.e, term I.
\begin{description}
\item[\textbf{Term I}]
By a standard inequality,
\begin{gather*}
  \bigl\langle |b|^{p-2}b - |a|^{p-2}a,b-a\bigr\rangle
  \geq \,\frac{4}{p^2}\left\vert  |b|^{\frac{p-2}{2}}b - |a|^{\frac{p-2}{2}}a\right \vert ^2,\\
  \dd_t^{-h}\bigl(|u_{hm}|^{p-2} u_{hm}\bigr)\dd_t^{-h} u_{hm}
  \geq \, \frac{4}{p^2}\left\vert \dd_t^{-h}\bigl(|u_{hm}|^{\frac{p-2}{2}} u_{hm}\bigr)\right \vert ^2.
\end{gather*}
\item[\textbf{Term II}]\, By convexity, \begin{gather*}
  p\,\langle |b|^{p-2}b, b-a \rangle\,\geq\,|b|^p-|a|^p,\\
  \Bigl\langle|\nabla u_{hm}|^{p-2}\nabla u_{hm}, \dd_t^{-h}(\nabla u_{hm})\Bigr\rangle\,\geq\,\frac{1}{p} \, \dd_t^{-h}\bigl(|\nabla u_{hm}|^{p}\bigr).
\end{gather*}
  Integrating the right-hand side and cancelling the overlap, we get the minorant
  \begin{align*}
   & \ont   \dd_t^{-h}\bigl(|\nabla u_{hm}|^{p}\bigr)\d x \d t\\
   & =\,\frac{1}{h}\ont |\nabla u_{hm}(x,t)|^{p}\d x \d t- \frac{1}{h}\ont |\nabla u_{hm}(x,t\!-\!h)|^{p}\d x \d t\\
   & =\,\frac{1}{h}\int\limits_{T\!-\!h}^T\!\!\int_{\Omega}|\nabla u_{hm}(x,t)|^{p}\d x \d t- \frac{1}{h}\int\limits_{-h\,\,\,}^0\!\!\!\int\limits_{\Omega} |\nabla u_{hm}(x,t)|^{p}\d x \d t\\
   & =\, \int\limits_{\Omega}|\nabla u_{hm}(x,T)|^{p}\d x -  \int\limits_{\Omega}|\nabla \psi_h(x,0)|^{p},
  \end{align*}
  since  $u_{hm}(x,t) = u_{hm}(x,T)$, when $T-h\leq t\leq T$
  and the initial data are  $u_{hm}(x,t) = \psi(x,0)$ when $-h < t < 0$.
\item[\textbf{Term III}] \, By Young's inequality,
\begin{align}
\label{eq:III}
\dd_t^{-h}\bigl(|u_{hm}|^{p-2} u_{hm}\bigr)&\dd_t^{-h}\psi_h\,
   \\
   & \leq\,\frac{1}{p}\,| \dd_t^{-h}\psi_h|^p + \frac{1}{q}\,\big\vert\dd_t^{-h}\bigl(|u_{hm}|^{p-2} u_{hm}\bigr)\big\vert^q.\nonumber
\end{align}
  The last term must be absorbed in the main term. To this end, use
  \begin{align*} \Bigl\vert |b|^{p-2}b& - |a|^{p-2}a \Bigr\vert^q\\
    \leq&\, \left\{(p\!-\!1)\Bigl(|b|^{\frac{p-2}{2}} + |a|^{\frac{p-2}{2}}\Bigr)\Bigl\vert
    |b|^{\frac{p-2}{2}}b - |a|^{\frac{p-2}{2}}a\Bigr\vert \right\}^q\\
    \leq&(p\!-\!1)^q\left\{\frac{q}{2}\varepsilon\left\vert|b|^{\frac{p-2}{2}}b - |a|^{\frac{p-2}{2}}a\right\vert^2
    +\Bigl(1\!-\frac{q}{2}\Bigr)\Bigl(\frac{1}{\varepsilon}\Bigr)^{\frac{p}{p-2}}\Bigl(
      |b|^{\frac{p-2}{2}} + |a|^{\frac{p-2}{2}}\Bigr)^{\frac{2q}{2-q}} \right\}\\
      \leq&\,C(p)\left\{\varepsilon \left\vert|b|^{\frac{p-2}{2}}b - |a|^{\frac{p-2}{2}}a\right\vert^2 + \Bigl(\frac{1}{\varepsilon}\Bigr)^{\frac{p}{p-2}}\left(|b|^p+|a|^p\right)\right\},
  \end{align*}
  to see that
  \begin{gather*}
    \frac{1}{q}\,|\dd_t^{-h}\bigl(|u_{hm}|^{p-2} u_{hm}\bigr)|^q \,\leq\,\frac{C(p)\varepsilon}{q}\,\Bigl\vert \dd_t^{-h}\bigl(|u_{hm}|^{\frac{p-2}{2}} u_{hm}\bigr)\Bigr\vert^2\\
    + \frac{C(p)}{q}\Bigl(\frac{1}{\varepsilon}\Bigr)^{\frac{p}{p-2}}\Bigl(|u_{hm}(x,t)|^p+|u_{hm}(x,t\!-\!h)|^p\Bigr).
  \end{gather*}
  
  For the absorbation by term I, we take  $\varepsilon = \varepsilon (p)$ so that
  $$\frac{C(p)\varepsilon}{q}\,=\, \frac{2}{p^2},\quad \text{which is}\quad < \frac{4}{p^2}\quad\text{in term I}.$$
\item[\textbf{Term IV}]\, By Young's inequality,
  $$|\nabla u_{hm}|^{p-2}\bigl\langle \nabla u_{hm}, \dd_t^{-h}(\nabla \psi_h)\bigr\rangle \,\leq\,\frac{1}{q}\,| \nabla u_{hm}|^p + \frac{1}{p}\,|\dd_t^{-h}(\nabla \psi_h)|^p.$$
\end{description}

Now all terms are estimated. Arranging them, we finally arrive at
\begin{align}
  \frac{2}{p^2}&\ont\Bigl\vert \dd_t^{-h}\Bigl(|u_{hm}|^{\frac{p-2}{2}}u_{hm}\Bigr)\Bigr\vert^2\d x \d t + \frac{1}{p}\int\limits_{\Omega}|\nabla u_{hm}(x,T)|^p\d x \label{prel}\\
  &\leq\, \frac{1}{q} \ont |\nabla u_{hm}|^p\d x \d t + A(p)\biggl(\ont |u_{hm}|^p\d x \d t + \int\limits_{\Omega}|\psi_h(x,0)|^p\d x\biggr) \nonumber\\
 & +\, \frac{1}{p}\int\limits_{\Omega}|\nabla \psi_h(x,0)|^p\d x +
  \frac{1}{p}\ont\Bigl(|\dd_t^{-h}\psi_h|^p + |\dd_t^{-h}(\nabla \psi_h) |^p\Bigr)\d x \d t.\nonumber
\end{align}

Before proceeding, we use some basic properties similar to those of Stekloff averages to simplify the $\psi_h$-terms:
\begin{align*}
  \int|\dd_t^{-h}\psi_h(x,t)|^p \d t\,\leq\,\int\Bigl\vert\frac{\dd}{\dd t}\psi(x,t)\Bigr\vert^p\d t,\\
  \int|\dd_t^{-h}\nabla \psi_h(x,t)|^p \d t\,\leq\,\int\Bigl\vert\frac{\dd}{\dd t}\nabla\psi(x,t)\Bigr\vert^p\d t.
\end{align*}

The integrals in the upper bound of (\ref{prel}) containing $ u_{hm}$ have to be bounded independently of $h$ and $m$. A uniform bound for the $L^p$-norm of  $ u_{hm}$  is usually derived by inserting the admissible test function $ u_{hm}-\psi_h$ in Lemma \ref{alpha}. We are content with a  bound coming from the Friedrichs-Sobolev inequality
$$\int\limits_{\Omega}|u_{hm}(x,t)-\psi_h(x,t)|^p\d x \,\leq\,S_p
\int\limits_{\Omega}|\nabla\bigl(u_{hm}(x,t)-\psi_h(x,t)\bigr)|^p\d x,$$
which is to be integrated with respect to $t$.
Also, observe that $|| \psi_h||_{L^p(\Om_T)}\le || \psi||_{L^p(\Om_T)}$ and $|| \nabla \psi_h||_{L^p(\Om_T)}\le ||\nabla  \psi||_{L^p(\Om_T)}$.
We obtain the estimate
\begin{align}\label {Gron}
  \frac{2}{p}&\ont\Bigl\vert \dd_t^{-h}\Bigl(|u_{hm}|^{\frac{p-2}{2}}u_{hm}\Bigr)\Bigr\vert^2\d x \d t + \int\limits_{\Omega}|\nabla u_{hm}(x,T)|^p\d x\nonumber \\
  &\leq\, C\biggl( \ont |\nabla u_{hm}|^p\d x \d t +  \int\limits_{\Omega}|\psi(x,0)|^p +|\nabla \psi(x,0)|^p  \d x\biggr)\\
 &\hspace{1 em} +
  C\ont\Bigl(|\psi|^p+|\nabla\psi|^p+|\frac{\dd}{\dd t}\psi|^p + |\frac{\dd}{\dd t}\nabla \psi |^p\Bigr)\d x \d t.\nonumber
\end{align}

Here, $C = C(p)$. We still need to bound the $L^p$-norm of $\nabla u_{hm}$.
Skipping the first term in inequality (\ref{Gron}), our estimate reads
$$\int\limits_{\Omega}|\nabla u_{hm}(x,T)|^p\d x\,\leq\,  C \ont |\nabla u_{hm}|^p\d x \d t + Q,$$
where $C= C(p)$ and $Q$, representing the $\psi$-integrals, is independent of $h,m,u_{hm}$, and of $T$, say, for $T\leq T_0$. We can regard $T$ as arbitrary by replacing $T=\tau$, where $\tau=0,h,2h,\ldots,T$. Then, one can deduce that the desired estimates also hold for arbitrary $\tau\in(0,T)$ since everything is piecewise constant in time. Then, we can apply Gr\"{o}nwall's inequality on
$$\xi(T)\,=\,\int\limits_{\Omega}|\nabla u_{hm}(x,T)|^p\d x, \quad \xi(T)\,\leq\, C\int\limits_0^T\xi(t)\d t + Q.  $$

It follows that
$$\ont |\nabla u_{hm}|^p\d x \d t \,\leq Q\,C_{T_0} \quad \text{when}\quad T\leq T_0,$$
where the constant $C_{T_0}$ depends only on $T_0$ and the above $C = C(p)$.

Finally, we can write the bounds in the form
\begin{align}\label{main}
  &\ont\biggl(\Bigl\vert \dd_t^{-h}\Bigl(|u_{hm}|^{\frac{p-2}{2}}u_{hm}\Bigr)\Bigr\vert^2 + |\nabla u_{hm}|^p+|u_{hm}|^p\biggr)\d x \d t +\int\limits_{\Omega} |\nabla u_{hm}(x,T)|^p\d x \nonumber \\
  &\qquad\quad\leq\, C(p,T_0)\biggl\{ \int\limits_{\Omega} \bigl(|\psi_{hm}(x,0)|^p + |\nabla \psi_{hm}(x,0)|^p\bigr)\d x\biggr.\\  &\qquad\quad\,\phantom{C(p,T_0)}+\biggl.
\ont\Bigl( |\psi|^p  +  |\nabla \psi|^p + \big\vert \frac{\dd}{\dd t}\,\psi\Big\vert^p + \big\vert \frac{\dd}{\dd t}\nabla\psi\Big\vert^p  \Bigr)\d x \d t\biggr\}, \nonumber
\end{align}
valid when $0 < T \leq T_0$.

\subsection{Proof of Theorem~\ref{Galerkin}}
\paragraph{The Convergence $\mathbf{u_{hm} \to u}$}  Now we have enough of compactness
to conclude that\begin{equation*}
\begin{cases} u_{hm} \to u \quad\text{strongly in} \quad L^p(\Omega_T)\\
  \nabla u_{hm} \rightharpoonup \nabla u \quad\text{weakly in}\quad L^p(\Omega_T)
\end{cases}\end{equation*}
for some function $u$ as $h \to 0$ and $m \to \infty$.
Observe that $u-\psi \in L^p(0,T;W^{1,p}_0(\Omega))$. We need the \emph{strong} convergence
$$
\nabla u_{hm} \,\to\, \nabla u \quad\text{strongly in}\quad L^p(\Omega_T)
$$
in order to verify that $u$ is a weak solution of Trudinger's equation. We prove this by
 selecting functions $v_{hm} \in L^p(0,T;V_m)$ that are piecewise time independent in each interval $((j-1)h,jh)$ and
$$
v_{hm} \to u-\psi \quad\text{strongly in}\quad L^p(0,T;W^{1,p}_0(\Omega)).
$$

In fact,  any function in $L^p(0,T;W^{1,p}_0(\Omega))$ can be approximated by such functions. Then,  the test function
$$\zeta_{hm}\,=\, u_{hm}  - (\psi_h + v_{hm})$$ is admissible
in the equation for $u_{hm}$.
Substituting this test function into the equation in Lemma \ref{alpha}, integrating and arranging the terms, we can write
\begin{align*}
 \ont&\bigl\langle|\nabla u_{hm}|^{p-2}\nabla u_{hm}-|\nabla u|^{p-2}\nabla u, \nabla u_{hm} - \nabla u \bigr\rangle\d x \d t \tag*{I}\\
  = \ont&\bigl\langle|\nabla u_{hm}|^{p-2}\nabla u_{hm}, \nabla (\psi_h+v_{hm}-u)\bigr\rangle\d x \d t \tag*{II}\\
  - & \ont\bigl\langle|\nabla u|^{p-2}\nabla u, \nabla u_{hm} - \nabla u\bigr\rangle\d x \d t \tag*{III}\\
   - &\ont\bigl( u_{hm}-(\psi_h+v_{hm})\bigr)\dd^{-h}_t\bigl(| u_{hm}|^{p-2} u_{hm}\bigr)\d x \d t. \tag*{IV}\\
\end{align*}

The three integrals on the right-hand side converge to zero. To wit, $\mathrm{III} \to 0$ by the weak convergence $\nabla u_{hm} \rightharpoonup \nabla u$. Denoting the majorant in (\ref{main}) by $Q$, we find that
\begin{gather*}|\mathrm{II}|\,\leq\, \Bigl(\ont|\nabla u_{hm}|^p\d x \d t\Bigr)^{\frac{1}{q}}\Bigl(\ont|\nabla(\psi_h+v_{hm}-u)|^p\d x \d t\Bigr)^{\frac{1}{p}}\\\leq\,Q^{\frac{1}{q}}\Bigl(\ont|\nabla(\psi_h+v_{hm}-u)|^p\d x \d t\Bigr)^{\frac{1}{p}} \, \,\to\,\, 0,\\
  |\mathrm{IV}|\,\leq\, \bigl(\ont\bigl\vert\dd^{-h}_t(|u_{hm}|^{p-2}u_{hm})\bigr\vert^2\d x \d t\Bigr)^{\frac{1}{2}}\Bigl(\ont(\psi_h+v_{hm}-u)^2\d x \d t\Bigr)^{\frac{1}{2}}\\ \leq\,Q^{\frac{1}{2}}\Bigl(\ont(\psi_h+v_{hm}-u)^2\d x \d t\Bigr)^{\frac{1}{2}}\,\,\to\,\,0.
\end{gather*}

It also follows that  the first integral $\mathrm{I}$ converges to $0$, and the vector inequality
$$ 2^{2-p}|b-a|^p\,\leq\,\bigl\langle|b|^{p-2}b-|a|^{p-2}a,b-a\bigr\rangle$$
      implies the desired strong convergence $\nabla u_{hm} \to \nabla u$ in $L^p(\Omega_T)$.
      \paragraph{The limit function is a weak solution.} Let $\beta_{M,j}\in C_0^1((0,T))$. The functions
      \begin{equation*}
        \phi(x,t)\,=\,\sum_{j=1}^M\beta_{M,j}(t)e_j(x)
      \end{equation*}
      are dense in the space $L^p(0,T;W^{1,p}_0(\Omega))$. Thus, it is sufficient to verify Definition \ref{solution} for such  test functions. When $m\geq M$, we can use
      $\phi(x,t-h)$ as the test function in the equation for $u_{hm}$ in Lemma \ref{alpha}. Upon integration with respect to $t$, we may write
      \begin{gather*}\ont\phi(x,t-h)\dd_t^{-h}\bigl(|u_{hm}(x,t)|^{p-2}u_{hm}(x,t)\bigr)\d x \d t\\
        + \ont\bigl\langle |\nabla u_{hm}(x,t)|^{p-2}\nabla u_{hm}(x,t) , \nabla \phi(x,t\!-\!h)\bigl\rangle \d x \d t\,=\,0. \nonumber
      \end{gather*}

      The limit of the second integral is evident by the strong convergence of the gradients.
 In  the first integral, the product rule
  \begin{gather*}\dd_t^{-h}\bigl(\phi(x,t)|u_{hm}(x,t)|^{p-2}u_{hm}(x,t)\bigr)\\= \phi(x,t\!-\!h)\dd_t^{-h}\bigl(|u_{hm}(x,t)|^{p-2}u_{hm}(x,t)\bigr) + |u_{hm}(x,t)|^{p-2}u_{hm}(x,t)\dd_t^{-h}\phi(x,t)
  \end{gather*}
\enlargethispage{\baselineskip}
  implies\footnote{The integral with the total difference ratio $\dd^{-h}_t\big(\phi\, |u_{hm}|^{p-2}u_{hm}\big)$ is zero when $h$ is small enough, since $\beta_{M,j}(t)$ has compact support, and thus we have a difference of the same integrals.}
     \begin{align*}
\ont&\phi(x,t-h)\dd_t^{-h}(|u_{hm}(x,t)|^{p-2}u_{hm}(x,t))\d x \d t\\
  &      =\ont\dd_t^{-h}\bigl(\phi(x,t)|u_{hm}(x,t)|^{p-2}u_{hm}(x,t)\bigr)\d x \d t\\
     &\hspace{1 em}   -\ont |u_{hm}(x,t)|^{p-2}u_{hm}(x,t)\dd_t^{-h}\phi(x,t)\d x \d t \,
        \\
&        \to \, - \ont |u|^{p-2}u\frac{\dd\phi}{\dd t}\d x \d t.
\end{align*}

   This verifies that $u$ satisfies the equation in Definition \ref{solution}, that is, $u$ is a weak solution and t-regular. This finishes the proof of Theorem~\ref{Galerkin}.\qquad $\Box$.

\vspace{1 ex}
{\small \textsf{Acknowledgements}: MP is supported by the Research Council of Finland, project 360185. 
SS is supported by Eemil Aaltonen Foundation through the research group `Quasiworld network', as well as by the Research Council of Finland, grant 354241.
The authors would like to thank Verena B\"{o}gelein and Tuomo Kuusi for useful information.}

\bigskip
{\small

\noindent \textsf{Peter Lindqvist\\ Department of
   Mathematical Sciences\\ Norwegian University of Science and
  Technology\\ N--7491 Trondheim, Norway}\\
\textsf{e-mail}: peter.lindqvist@ntnu.no

 \bigskip
\noindent \textsf{Mikko Parviainen \\  Department of Mathematics and Statistics\\ University of Jyv\"askyl\"a \\ 
FI-40014 Jyv\"askyl\"a, Finland}\\
  \textsf{e-mail}: mikko.j.parviainen@jyu.fi
  
\bigskip
\noindent \textsf{Saara Sarsa \\  Department of Mathematics and Statistics\\  University of Jyv\"askyl\"a\\
FI-40014 Jyv\"askyl\"a, Finland}\\
  \textsf{e-mail}: saara.m.sarsa@jyu.fi\\
}


\begin{thebibliography}{ABC}
{\small


\bibitem[AL]{AL}
\newblock H. Alt and S. Luckhaus,
\newblock \emph{Quasilinear elliptic-parabolic differential equations},
\newblock \emph{Mathematische Zeitschrift}, \textbf{183} (1983), 311-341.

\bibitem[BDL]{BDL}
\newblock V. B\"{o}gelein, F. Duzaar and N. Liao,
\newblock \emph{On the H\"{o}lder regularity of signed solutions to a doubly nonlinear equation},
\newblock \emph{Journal of Functional Analysis}, \textbf{281} (2021), Paper No.~109193, 58 pp.

\bibitem[Db]{Db}
\newblock E. DiBenedetto,
\newblock \emph{Degenerate Parabolic Equations},
\newblock Springer-Verlag, Berlin, 1993.

\bibitem[GV]{GV}
\newblock U. Gianazza and V. Vespri,
\newblock \emph{A Harnack inequality for solutions of doubly nonlinear parabolic equations},
\newblock \emph{Journal of Applied Functional Analysis}, \textbf{1}, 2006, 271-284.

\bibitem[IMJ]{IMJ}
\newblock A. Ivanov, P. Mkrtychan and W. J\"{a}ger,
\newblock \emph{Existence and uniqueness of a regular solution of the Cauchy--Dirichlet problem for a class of doubly nonlinear parabolic equations},
\newblock \emph{Journal of Mathematical Sciences}, \textbf{84} (1997),  845-855.

\bibitem[IMM]{IMM}
\newblock P.-A. Ivert, N. Marola and M. Masson,
\newblock \emph{Energy estimates for variational minimizers of a parabolic doubly nonlinear equation on metric measure spaces},
\newblock \emph{Annales Academi\ae\ Scientiarum Fennic\ae\ Mathematica}, \textbf{39} (2014), 711-719.

\bibitem[KK]{KK}
\newblock J. Kinnunen and T. Kuusi,
\newblock \emph{Local behavior of solutions to doubly nonlinear parabolic equations},
\newblock \emph{Mathematische Annalen}, \textbf{337} (2007), 705-728.

\bibitem[LUS]{LUS}
\newblock O. Ladyzhenskaya, N. Uraltseva and V. Solonnikov,
\newblock \emph{Linear and Quasi-Linear Equations of Parabolic Type},
\newblock Translations of Mathematical Monographs, 23. American Mathematical Society, Providence, RI, 1968.

\bibitem[LL]{LL}
\newblock E. Lindgren and P. Lindqvist,
\newblock \emph{On a comparison principle for Trudinger's equation},
\newblock \emph{Advances in Calculus of Variations}, \textbf{15} (2022), 401-415.

\bibitem[LPS]{LPS}
\newblock P. Lindqvist, M. Parviainen and J. Siltakoski,
\newblock \emph{Lipschitz continuity and equivalence of positive viscosity and weak solutions to Trudinger's equation},
\newblock \emph{Math. Ann.}, \textbf{394} (2026), Paper No. 59, 42 pp, arXiv:2502.14670.

\bibitem[MN]{MN}
\newblock M. Misawa and K. Nakamura,
\newblock \emph{Existence of sign-changing weak solutions to doubly nonlinear parabolic equations},
\newblock \emph{The Journal of Geometric Analysis}, \textbf{33} (2023), Paper No. 33, 44 pp.

\bibitem[M]{M}
\newblock J. Moser,
\newblock \emph{A Harnack inequality for parabolic differential equations},
\newblock \emph{Communications on Pure and Applied Mathematics}, \textbf{17} (1964), 101-134.

\bibitem[O]{O}
\newblock F. Otto,
\newblock \emph{$L^1$-contraction and uniqueness for quasilinear elliptic-parabolic equations},
\newblock \emph{Journal of Differential Equations}, \textbf{131} (1996), 20-38.

\bibitem[S]{S}
\newblock L. Sch\"{a}tzler,
\newblock \emph{Existence for singular doubly nonlinear systems of porous medium type with time dependent boundary values},
\newblock \emph{Journal of Elliptic and Parabolic Equations}, \textbf{5} (2019), 383-421.

\bibitem[T]{T}
\newblock N. Trudinger,
\newblock \emph{Pointwise estimates and quasilinear parabolic equations},
\newblock \emph{Communications on Pure and Applied Mathematics}, \textbf{21} (1968), 205-226.


}
\end{thebibliography}
\end{document}